\documentclass[12pt]{article}
\usepackage{amsmath,amsfonts,amsthm,amssymb,mathrsfs,mathtools}
\usepackage{graphicx}
\usepackage[usenames,dvipsnames]{color}
\usepackage{float}
\usepackage{graphicx}
\usepackage{subfigure}
\usepackage{caption}
\usepackage[symbol]{footmisc}
\usepackage{cite}
\usepackage{url}
\usepackage{caption}
\usepackage{epstopdf}
\usepackage{hyperref}

\usepackage{color}

\theoremstyle{plain}
\newtheorem{theorem}{Theorem}[section]

\newtheorem{lemma}[theorem]{Lemma}
\newtheorem{proposition}[theorem]{Proposition}

\theoremstyle{definition}

\theoremstyle{remark}
\newtheorem{remark}[theorem]{Remark}

\numberwithin{equation}{section}
\numberwithin{theorem}{section}

\usepackage{a4wide}
\usepackage{geometry}
\geometry{ hmargin=2cm, vmargin=2.5cm }
\begin{document}
	\renewcommand{\thefootnote}{\fnsymbol{footnote}}
	
	\begin{center}
		{\Large \textbf{Bridge-Type Processes Associated with L\'evy Processes and Their Decompositions}} \\[0pt]
		~\\[0pt] \textbf{Mohamed Erraoui} \footnote[1]{Mathematics Department, Faculty of Sciences, Choua\"ib Doukkali University, Route Ben Maachou, 24000, El jadida, Morocco. 
			E-mail: \texttt{erraoui@uca.ac.ma}} \textbf{Astrid Hilbert} \footnote[2]{Linnaeus University, Vejdesplats 7, SE-351 95 V\"axj\"o, Sweden.
			E-mail: \texttt{astrid.hilbert@lnu.se}} and \textbf{Mohammed Louriki} \footnote[3]{Mathematics Department, Faculty of Sciences Semalalia, Cadi Ayyad University, Boulevard Prince Moulay Abdellah, P. O. Box 2390, Marrakesh 40000, Morocco. E-mail: \texttt{m.louriki@uca.ac.ma}}
		\\[0pt]
		%	\textit{Choua\"ib Doukkali University, Cadi Ayyad University and Linnaeus University  }\\[0pt]
		%	~\\[0pt]
	\end{center}
\begin{abstract}
We study a class of stochastic bridge-type processes whose terminal pinning value is random and is generated by an underlying stochastic process. In contrast with classical bridges, the construction depends not only on the terminal value of the driving process but also on its evolution before the terminal time. This dynamic stochastic input breaks some of the classical Markovian structure and requires a separate analysis of the semimartingale decomposition in the natural filtration.

We first analyze the Brownian case, which provides a Gaussian reference model, and show that the corresponding process is not Markovian in its natural filtration. We then extend the study to non-Gaussian L\'evy drivers, focusing on finite variation jump processes and on L\'evy processes with both Gaussian and jump components. In each case, we study the Doob--Meyer decomposition in the natural filtration.
\end{abstract}
	\smallskip
	\noindent 
	\textbf{Keywords:} L\'evy processes, Brownian bridges, Gaussian processes, semimartingale, stochastic filtering theory.\\
	\\ 
	\\
	\textbf{MSC 2010:} 60G15, 60G40, 60G44, 60G51, 60J25.
\section{Introduction}
		\label{Setion_1}
\quad\,\,Brownian bridges play a central role in stochastic analysis, with applications ranging from statistical inference to mathematical finance; see, for instance, \cite{B}, \cite{R}, \cite{K}, \cite{BS}, \cite{EW}, and \cite{JY}. Classically, a Brownian bridge is defined as a Brownian motion conditioned to attain a fixed terminal value, usually called the pinning point. Its dynamics are governed by a singular linear drift term, which yields Markov and semimartingale properties together with an explicit canonical decomposition; see, e.g., \cite{A}, \cite{SY}, \cite{GSV}, \cite{RY}, and \cite{EY}. More recently, this framework has been extended to bridges with random pinning points, which have been shown to retain many of the structural properties of the classical Brownian bridge; see, e.g., \cite{BHM2007}, and \cite{L2}. Bridges with random length have also been studied; see, for instance, \cite{BBE},  and \cite{EHL(Levy)}.
	
Let $(W_t)_{t\in[0,T]}$ be a standard Brownian motion and let $Z$ be a random variable independent of $W$. A standard representation of the Brownian bridge $(\beta_t^T)_{t\in[0,T]}$ from $0$ to $Z$ is given by
$$ \beta_t^T= W_t-\frac{t}{T}W_T+\frac{t}{T}Z, \qquad t\in[0,T].$$
In the spirit of the information-based framework of Brody, Hughston, and Macrina; see, e.g., \cite{BHM2007}, this representation may be interpreted as a model of progressive information flow: the process $(\beta_t^T)_{t\in[0,T]}$ reveals the terminal variable $Z$ in a noisy and gradual manner, with the uncertainty vanishing as $t$ approaches $T$.
	
When the pinning point is itself the terminal value $X_T$ of a stochastic process $(X_t)_{t\in[0,T]}$, one may construct bridge-type processes whose behaviour depends not only on the terminal value but also on the temporal evolution of $X$. In this work, our main object of interest is the process
$$ \bar{X}_t = W_t-\frac{t}{T}W_T+\frac{t}{T}X_t, \qquad t\in[0,T], $$
where $(X_t)_{t\in[0,T]}$ is a L\'evy process independent of the Brownian motion $W$. 

The process $\left(\frac{t}{T}X_t\right)_{t\in[0,T]}$ may be interpreted as a time-dependent observation which progressively reveals information about the underlying process $X$. The factor $t/T$ acts as an observation scale: at the initial time no information is added, whereas at the terminal time the additional component coincides with the terminal value $X_T$. Hence, in contrast with the classical Brownian bridge with a fixed random pinning point, the random contribution is not determined solely by a terminal variable known from the outset, but evolves dynamically with the driving process.

Since Brownian motion is a particular example of a L\'evy process, the Brownian case naturally appears as a fundamental special case of this general framework. For this reason, we begin our analysis with the case where $X$ is a Brownian motion. This Gaussian setting is of independent interest, since it allows explicit computations and provides a useful reference model for the general L\'evy-driven construction.

The main objective of this paper is to investigate the structural properties of these bridge-type processes in their natural filtrations. In the Brownian case, we show that the process is not Markovian with respect to its natural filtration. This also indicates that one should not expect a general Markov property in the broader L\'evy framework. The principal focus of the paper is therefore on the semimartingale structure. We establish the semimartingale property and derive the corresponding Doob--Meyer decompositions in the natural filtrations. The analysis is carried out first in the Gaussian case and then beyond the Gaussian framework, for finite variation L\'evy drivers and for L\'evy processes with both Gaussian and jump components.
    
Let us note that in \cite{FWY}, the authors study the canonical decomposition of Brownian bridges between two Brownian motions. More precisely, they consider a bridge-type stochastic differential equation in which the fixed pinning point appearing in the drift of the classical Brownian bridge is replaced by an independent Brownian motion. Our approach differs from \cite{FWY} in several respects. First, we rely on a different representation of the bridge, namely an explicit bridge-type representation involving the scaled process $\frac{t}{T}X_t$. Second, our analysis is not restricted to the Gaussian framework: after treating the Brownian case as a guiding example, we extend the study to the case where the driving process is a general L\'evy process.

The remainder of the paper is organized as follows. In Section~2, we study the Brownian case, where the L\'evy driver is replaced by an independent Brownian motion. We compute the covariance function, show that the resulting bridge-type process is not Markovian in its natural filtration, and derive its semimartingale and Doob--Meyer decompositions. In Section~3, we turn to the non-Gaussian L\'evy framework. We first treat finite variation L\'evy drivers and then consider L\'evy processes with both Gaussian and finite variation jump components. In both settings, we establish the semimartingale property and derive the corresponding Doob--Meyer decompositions in the natural filtrations.      

	\section{The Brownian Case: A Gaussian Bridge-Type Process} In this section, we study the bridge-type process \begin{equation} 
    \bar{\beta}^{T}_t = W_t - \frac{t}{T}W_T + \frac{t}{T} B_t, \qquad 0 \leq t \leq T, \label{betabar} \end{equation} 
    where $(W_t)_{0\leq t\leq T}$ and $(B_t)_{0\leq t\leq T}$ are two independent standard Brownian motions. This process may be viewed as the Brownian counterpart of the more general L\'evy-driven bridge-type process considered later in the paper. Our aim in this section is to establish the basic properties of $\bar{\beta}^{T}$ with respect to its natural filtration. In particular, we study its Gaussian structure, compute its covariance function, analyze its Markov property, and derive its semimartingale and Doob--Meyer decompositions. These results serve as a reference point for the general L\'evy case, where the Gaussian structure is no longer available.\\
	It is immediate that \(\bar{\beta}^T\) is a centred Gaussian process with covariance function given by
	\begin{equation}
		\mathbb{E}\!\left[\bar{\beta}^T_s\, \bar{\beta}^T_t\right]
		= s \wedge t
		\;-\; \frac{st}{T}\!\left(1 - \frac{s \wedge t}{T}\right),
		\qquad s,t \in [0,T].
		\label{eqcovfunctionbar}
	\end{equation}
	\begin{proposition}
		The process $\bar{\beta}^T$ is not an $\mathbb{F}^{\bar{\beta}^T}$-Markov process.
	\end{proposition}
	\begin{proof}
		The fact that is $\bar{\beta}^T$ is not a Markov process can be deduced from the well-known characterization of the Markov property in the Gaussian setting (see e.g.  Ch.III p. 86 in \cite{RY}). Indeed, for $s<u\leq T$ and $t=(s+u)/2$ we have
		\begin{equation}
			\mathbb{E}[\bar{\beta}^T_s\bar{\beta}^T_t]\,\mathbb{E}[\bar{\beta}^T_t\bar{\beta}^T_u]\neq \mathbb{E}[(\bar{\beta}^T_t)^2]\,\mathbb{E}[\bar{\beta}_s^T\bar{\beta}_u^T].\label{eqgaussianMarkov}
		\end{equation}
		Which is the desired result.
	\end{proof}
	Concerning the semimartingale property, observing that the processes $(\frac{t}{T}\,B_t,\,t\geq 0)$ and $\beta^T$ are both semi-martingales with respect to $\mathbb{F}^{B}\vee\mathbb{F}^{\beta^T}$, then the process $\bar{\beta}^T$ (defined in \eqref{betabar}) is a semi-martingale with respect to $\mathbb{F}^{B}\vee\mathbb{F}^{\beta^T}$. Hence, Stricker's theorem (see, e.g., \cite[Ch.II, Theorem 4]{P}) yields that the process $\bar{\beta}^T$ is an $\mathbb{F}^{\bar{\beta}^T}$-semi-martingale. However, exploiting the properties of both the Brownian motion $B$ and the Brownian bridge $\beta^T$, direct calculation reveals that
	\begin{equation}
		\mathbb{E}[\bar{\beta}^T_t\vert \mathcal{F}^{B}_{s}\vee\mathcal{F}^{\beta^T}_{s}]=\dfrac{T-t}{T-s}\bar{\beta}^T_s+\sigma\,T\dfrac{t-s}{T-s}B_s,\,\,0\leq s\leq t<T,\label{eqcodexpbartgivenFBbeta}
	\end{equation}
	which implies that $\bar{\beta}^T$ is not a martingale with respect to $\mathbb{F}^{B}\vee\mathbb{F}^{\beta^T}$. Since the process $\bar{\beta}^T$ is a centred Gaussian process with covariance function given in \eqref{eqcovfunctionbar}, one might wonder whether or not $\bar{\beta}^T$ is a quasi-martingale? 
	
	Before providing an answer to this question, we briefly remind the reader of the definition of the quasi-martingale. An $\mathbb{F}$-adapted, c\`adl\`ag process $Z$ is a quasi-martingale on $[0, T]$ with respect to $\mathbb{F}$ if $\mathbb{E}[\vert Z_t\vert]<+\infty$, for each $t$, and if $$\sup\limits_{\mathcal{P}}\sum\limits_{i=1}^{n-1}\mathbb{E}\bigg[\bigg\vert \mathbb{E}\bigg[ Z_{t_{i+1}}-Z_{t_{i}}\bigg\vert \mathcal{F}_{t_i} \bigg] \bigg\vert\bigg]<+\infty,$$
	where $\mathcal{P}$ is the set of all finite partitions $0=t_0<t_1<\ldots<t_{n-1}<t_n=T$ of $[0, T]$. For a deeper discussion of quasi-martingale concepts we refer the reader to a section in chapter III in Protter \cite{P} which is devoted to the study of quasi-martingales. We have the following result:
	\begin{proposition}\label{propquasimartingale}
		The process $\bar{\beta}^T$ is a quasi-martingale with respect to $\mathbb{F}^{\beta^T}\vee\mathbb{F}^{B}$.
	\end{proposition}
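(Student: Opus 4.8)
The plan is to verify directly Rao's definition of a quasi-martingale: an adapted, integrable process $Z$ is a quasi-martingale on $[0,T]$ relative to a filtration $\mathbb{G}=(\mathcal{G}_t)$ if its conditional mean variation
\[
\operatorname{Var}_{\mathbb{G}}(Z)=\sup_{\pi}\sum_{i=0}^{n-1}\mathbb{E}\Bigl[\abs{\mathbb{E}[Z_{t_{i+1}}-Z_{t_i}\mid \mathcal{G}_{t_i}]}\Bigr]
\]
is finite, the supremum being over all finite partitions $\pi:0=t_0<t_1<\cdots<t_n=T$. Here $Z=\bar{\beta}^T$ and $\mathcal{G}_t=\mathcal{F}^{\beta^T}_t\vee\mathcal{F}^B_t$. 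Integrability and adaptedness are immediate because $\bar{\beta}^T$ is a centred Gaussian process adapted to $\mathbb{F}^{\beta^T}\vee\mathbb{F}^B$, so the whole matter reduces to bounding $\operatorname{Var}_{\mathbb{G}}(\bar{\beta}^T)$ uniformly in $\pi$.

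First I would convert the conditional expectation \eqref{eqcodexpbartgivenFBbeta} into a formula for conditional increments. Subtracting $\bar{\beta}^T_s$ from both sides of \eqref{eqcodexpbartgivenFBbeta} and simplifying yields, for $0\leq s\leq t<T$,
\[
\mathbb{E}[\bar{\beta}^T_t-\bar{\beta}^T_s\mid\mathcal{G}_s]=\frac{t-s}{T-s}\,\bigl(B_s-\bar{\beta}^T_s\bigr),
\]
so each summand in $\operatorname{Var}_{\mathbb{G}}$ equals $\frac{t_{i+1}-t_i}{T-t_i}\,\mathbb{E}\abs{B_{t_i}-\bar{\beta}^T_{t_i}}$. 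The second step is to evaluate the Gaussian $L^1$-norm $\mathbb{E}\abs{B_s-\bar{\beta}^T_s}$. Writing $\bar{\beta}^T_s=\beta^T_s+\tfrac{s}{T}B_s$ with $\beta^T$ independent of $B$ gives $B_s-\bar{\beta}^T_s=\tfrac{T-s}{T}B_s-\beta^T_s$, a centred Gaussian variable whose variance, by independence, equals $\bigl(\tfrac{T-s}{T}\bigr)^2 s+\tfrac{s(T-s)}{T}=\tfrac{s(T-s)(2T-s)}{T^2}$; hence $\mathbb{E}\abs{B_s-\bar{\beta}^T_s}=\sqrt{2/\pi}\,\bigl(\tfrac{s(T-s)(2T-s)}{T^2}\bigr)^{1/2}$.

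The crux, and the step I expect to be the main obstacle, is the apparent singularity of the factor $\tfrac{1}{T-s}$ as $s\uparrow T$. The saving observation is that the $L^1$-norm just computed vanishes like $\sqrt{T-s}$ at the terminal time: bounding $s\leq T$ and $2T-s\leq 2T$ gives $\mathbb{E}\abs{B_s-\bar{\beta}^T_s}\leq \tfrac{2}{\sqrt{\pi}}\sqrt{T-s}$, so each summand is at most $\tfrac{2}{\sqrt{\pi}}\,\tfrac{t_{i+1}-t_i}{\sqrt{T-t_i}}$. Since $(T-t)^{-1/2}\geq (T-t_i)^{-1/2}$ on $[t_i,t_{i+1}]$, this term is dominated by $\int_{t_i}^{t_{i+1}}(T-t)^{-1/2}\,dt$, and summing over $i$ compares the partition with a convergent improper integral,
\[
\sum_{i=0}^{n-1}\frac{t_{i+1}-t_i}{\sqrt{T-t_i}}\leq\int_0^T\frac{dt}{\sqrt{T-t}}=2\sqrt{T}.
\]
This gives $\operatorname{Var}_{\mathbb{G}}(\bar{\beta}^T)\leq \tfrac{4\sqrt{T}}{\sqrt{\pi}}$ uniformly over all $\pi$, which proves the claim.

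A minor technical point I would settle at the end concerns the terminal increment $t_{i+1}=T$, since \eqref{eqcodexpbartgivenFBbeta} is stated only for $t<T$. Using $\bar{\beta}^T_T=B_T$ together with the independence of $B$ and $\beta^T$ and the martingale property of $B$, one has $\mathbb{E}[\bar{\beta}^T_T-\bar{\beta}^T_s\mid\mathcal{G}_s]=B_s-\bar{\beta}^T_s$, which is exactly the limit of $\tfrac{t-s}{T-s}(B_s-\bar{\beta}^T_s)$ as $t\uparrow T$; thus the last summand carries the weight $\tfrac{T-t_{n-1}}{T-t_{n-1}}=1$ and fits the same bound, so no separate estimate is needed.
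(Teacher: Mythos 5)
Your proposal is correct and follows essentially the same route as the paper: both reduce the claim to bounding the conditional mean variation, compute the conditional increment $\mathbb{E}[\bar{\beta}^T_t-\bar{\beta}^T_s\mid\mathcal{G}_s]=\tfrac{t-s}{T-s}(B_s-\bar{\beta}^T_s)$ (the paper writes it equivalently as $-\tfrac{t-s}{T-s}\beta^T_s+\tfrac{t-s}{T}B_s$), obtain the same Gaussian $L^1$-estimate $\tfrac{2}{\sqrt{\pi}}\tfrac{t_{i+1}-t_i}{\sqrt{T-t_i}}$ per summand, and dominate the sum by the convergent integral $\int_0^T(T-t)^{-1/2}\,dt$ to get the uniform bound $4\sqrt{T/\pi}$. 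Your explicit treatment of the terminal increment $t_{i+1}=T$ is a small point the paper glosses over, but it changes nothing of substance.
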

	\begin{proof}
		We need to show that: 
		\begin{enumerate}
			\item[(i)] For all $0\leq t\leq T$, $\mathbb{E}[\vert \bar{\beta}^T_t \vert]<+\infty$.
			\item[(ii)] $\sup\limits_{\mathcal{P}}\sum\limits_{i=1}^{n-1}\mathbb{E}\bigg[\bigg\vert \mathbb{E}\bigg[ \bar{\beta}^T_{t_{i+1}}-\bar{\beta}^T_{t_{i}}\bigg\vert \mathcal{F}^{B}_{t_i}\vee\mathcal{F}^{\beta^T}_{t_i} \bigg] \bigg\vert\bigg]<+\infty,$
			where $\mathcal{P}$ is the set of all finite partitions $0=t_0<t_1<\ldots<t_{n-1}<t_n=T$ of $[0, T]$.
		\end{enumerate}
		%Our proof starts with the observation that $\bar{\beta}^T$ is a centred Gaussian process with covariance function given by
		%\begin{equation}
		%\mathbb{E}[\bar{\beta}^T_s\bar{\beta}^T_t]=s\wedge t-\dfrac{st}{T}\bigg(1-\dfrac{s\wedge t}{T}\bigg).\label{eqcovfunctionbar}
		%\end{equation}
		As regards statement (i), it is easy to see that $\mathbb{E}[\vert \bar{\beta}^T_t \vert]=\sqrt{\dfrac{2t}{\pi T}\bigg(T-t+\dfrac{t^2}{T}\bigg)}<+\infty$. Concerning the statement (ii), we have for $1\leq i\leq n$
		\begin{align*}
			\mathbb{E}\bigg[ \bar{\beta}^T_{t_{i+1}}-\bar{\beta}^T_{t_{i}}\bigg\vert \mathcal{F}^{B}_{t_i}\vee\mathcal{F}^{\beta^T}_{t_i} \bigg]&=\mathbb{E}\bigg[ \beta^T_{t_{i+1}}+\dfrac{t_{i+1}}{T}(B_{t_{i+1}}-B_{t_i})\bigg\vert \mathcal{F}^{\beta^T}_{t_i} \bigg]-\beta^T_{t_{i}}+\dfrac{t_{i+1}-t_i}{T}B_{t_i}\\
			&=-\dfrac{t_{i+1}-t_{i}}{T-t_i}\beta^T_{t_{i}}+\dfrac{t_{i+1}-t_i}{T}B_{t_i}.
		\end{align*}
		Hence,
		\begin{align*}
			\mathbb{V}\text{ar}\left(\mathbb{E}\bigg[ \bar{\beta}^T_{t_{i+1}}-\bar{\beta}^T_{t_{i}}\bigg\vert \mathcal{F}^{B}_{t_i}\vee\mathcal{F}^{\beta^T}_{t_i} \bigg]\right)&=\dfrac{(t_{i+1}-t_{i})^2}{(T-t_i)^2}\dfrac{t_i(T-t_i)}{T}+\dfrac{ (t_{i+1}-t_i)^2}{T^2}t_i\\
			&=(t_{i+1}-t_{i})^2t_i\left[\dfrac{1}{T(T-t_i)}+\dfrac{1}{T^2}\right]
		\end{align*}
	This implies that
		\begin{align}
			\mathbb{E}\bigg[\bigg\vert \mathbb{E}\bigg[ \bar{\beta}^T_{t_{i+1}}-\bar{\beta}^T_{t_{i}}\bigg\vert \mathcal{F}^{B}_{t_i}\vee\mathcal{F}^{\beta^T}_{t_i} \bigg] \bigg\vert\bigg]&=\sqrt{\dfrac{2}{\pi}}\bigg(t_{i+1}-t_{i}\bigg)\sqrt{\dfrac{t_{i}(2T-t_i)}{T^2(T-t_i)}}\leq\dfrac{2}{\sqrt{\pi}}\dfrac{t_{i+1}-t_{i}}{\sqrt{T-t_i}}.\label{eqquasi-martingale}
		\end{align}
		It follows from \eqref{eqquasi-martingale} that 
		\begin{equation}
			\sum\limits_{i=1}^{n-1}\mathbb{E}\bigg[\bigg\vert \mathbb{E}\bigg[ \bar{\beta}^T_{t_{i+1}}-\bar{\beta}^T_{t_{i}}\bigg\vert \mathcal{F}^{B}_{t_i}\vee\mathcal{F}^{\beta^T}_{t_i} \bigg] \bigg\vert\bigg]\leq \dfrac{2}{\sqrt{\pi}}\sum\limits_{i=1}^{n-1}\dfrac{t_{i+1}-t_{i}}{\sqrt{T-t_i}}.\label{equpperbound}
		\end{equation}
		The right hand side in \eqref{equpperbound} represents exactly the lower Darboux sum for the function $\dfrac{2}{\sqrt{\pi(T-x)}}$ on the interval $[0, T]$. Thus,
		\begin{equation}
			\sup\limits_{\mathcal{P}}\sum\limits_{i=1}^{n-1}\mathbb{E}\bigg[\bigg\vert \mathbb{E}\bigg[ \bar{\beta}^T_{t_{i+1}}-\bar{\beta}^T_{t_{i}}\bigg\vert \mathcal{F}^{B}_{t_i}\vee\mathcal{F}^{\beta^T}_{t_i} \bigg] \bigg\vert\bigg]\leq \dfrac{2}{\sqrt{\pi}}\sup\limits_{\mathcal{P}}\sum\limits_{i=1}^{n-1}\dfrac{t_{i+1}-t_{i}}{\sqrt{T-t_i}}\leq 4\sqrt{\dfrac{T}{\pi}}<\infty,
		\end{equation}
		which is the desired result.
	\end{proof}
	
	Since $\bar{\beta}^T$ is an $\mathbb{F}^{\bar{\beta}^T}$-semi-martingale, a natural question arises: what is the explicit form of its canonical decomposition? In the following results we provide an answer to this question.
\begin{lemma}
		The process $(\bar{M}_t, 0\leq t \leq T)$ given by 
		\begin{equation}
			\bar{M}^T_t=\bar{\beta}^T_t-\displaystyle\int_{0}^{t}\dfrac{ B_s-\bar{\beta}^T_s}{T-s}\mathrm{d}s,\label{eqmamrtingale_barM}
		\end{equation}
		is a Gaussian martingale with respect to $\mathbb{F}^{\beta^T}\vee\mathbb{F}^{B}$.
\end{lemma}
\begin{proof}
	We have for all $0\leq t\leq T$,
\begin{equation}
	\mathbb{E}\bigg[\displaystyle\int_0^t\bigg\vert \dfrac{\bar{\beta}^T_s-B_s}{T-s} \bigg\vert\mathrm{d}s\bigg]\leq \displaystyle\int_0^t \bigg(\dfrac{\mathbb{E}[\vert\beta^T_s\vert]}{T-s}+\dfrac{\mathbb{E}[\vert B_s\vert]}{T}\bigg)\mathrm{d}s=\sqrt{\dfrac{2}{T\pi}}\displaystyle\int_0^t\bigg( \sqrt{\dfrac{s}{T-s}}+\sqrt{\dfrac{s}{T}}\bigg) \mathrm{d}s<\infty.\label{eqintegralfinite}
\end{equation}
	Hence, the integral at the right-hand side of \eqref{eqmamrtingale_barM} is well defined for $0\leq t\leq T$. On the other hand, for all $s< t \leq T$, we have
	\begin{align}
		\mathbb{E}[\bar{M}^T_t-\bar{M}^T_s\vert \mathcal{F}^{B}_{s}\vee\mathcal{F}^{\beta^T}_{s}]=	\mathbb{E}[\bar{\beta}^T_t-\bar{\beta}^T_s\vert \mathcal{F}^{B}_{s}\vee\mathcal{F}^{\beta^T}_{s}]-\displaystyle\int_{s}^{t}\mathbb{E}\bigg[\dfrac{B_u-\bar{\beta}^T_u}{T-u}\bigg\vert \mathcal{F}^{B}_{s}\vee\mathcal{F}^{\beta^T}_{s} \bigg]\mathrm{d}u.\label{eqShowmartingalegivenFBbeta}
	\end{align}
	It follows from \eqref{eqcodexpbartgivenFBbeta} that
	\begin{equation}
		\mathbb{E}[\bar{\beta}^T_t-\bar{\beta}^T_s\vert \mathcal{F}^{B}_{s}\vee\mathcal{F}^{\beta^T}_{s}]=-\dfrac{t-s}{T-s}\bar{\beta}^T_s+\dfrac{t-s}{T-s}B_s\label{eqEgivenBeta_B1}
	\end{equation}
	and
	\begin{equation}
		\mathbb{E}[B_u-\bar{\beta}^T_u\vert \mathcal{F}^{B}_{s}\vee\mathcal{F}^{\beta^T}_{s}]=-\dfrac{T-u}{T-s}\bar{\beta}^T_s+\dfrac{T-u}{T-s}B_s.\label{eqEgivenBeta_B2}
	\end{equation}
	Inserting \eqref{eqEgivenBeta_B1} and \eqref{eqEgivenBeta_B2} into \eqref{eqShowmartingalegivenFBbeta} we conclude that for all $s\leq t <T$,
	\begin{align*}
		\mathbb{E}[\bar{M}^T_t\vert \mathcal{F}^{B}_{s}\vee\mathcal{F}^{\beta^T}_{s}]=\bar{M}^T_s.
	\end{align*}
	This completes the proof.
\end{proof}
	In the next result, we provide the semi-martingale decomposition of $\bar{\beta}^T$ with respect to its natural filtration.
	\begin{proposition}
		The canonical decomposition of $\bar{\beta}^T$ in its natural filtration $\mathbb{F}^{\bar{\beta}^T}$ is given by
		\begin{equation}
			\bar{\beta}^T_t=\displaystyle\int_{0}^{t}\sqrt{\frac{T^2+s^2}{T^2}}\mathrm{d}\bar{B}_s-\displaystyle\int_{0}^{t}\dfrac{\bar{\beta}^T_s}{T-s}\mathrm{d}s+\displaystyle\int_{0}^{t}\displaystyle\int_{0}^{s}\dfrac{\mathfrak{a}(s,u)}{T-s}\mathrm{d}\bar{\beta}^T_u\mathrm{d}s,\,\,t<T,\label{eqcanonivaldecompositionbarbeta}
		\end{equation}
		%	\begin{equation}
			%	\bar{\beta}^T_t=\displaystyle\int_{0}^{t}\sqrt{\dfrac{T^2+s^2}{T^2}}\mathrm{d}\bar{B}_s-\displaystyle\int_{0}^{t}\dfrac{1}{T-s}\bigg(\bar{\beta}^T_s-\displaystyle\int_{0}^{s}a(s,u)\mathrm{d}\bar{\beta}^T_u\bigg)\mathrm{d}s,\,\,t<T,
			%	\end{equation}
		where $\bar{B}$ is an $\mathbb{F}^{\bar{\beta}^T}$-Brownian motion, and for $0\leq u\leq s<T,$
		\begin{equation}
			\mathfrak{a}(s,u)=\dfrac{T-s}{T-s+ s\,\tan^{-1}(\frac{s}{T})}\bigg(\dfrac{T u}{u^2+T^2}+\tan^{-1}\left(\frac{u}{T}\right)\bigg)+\dfrac{s\,\tan^{-1}(\frac{s}{T})}{T-s+s\,\tan^{-1}(\frac{s}{T})}.\label{eqa(s,u)}
		\end{equation}
	\end{proposition}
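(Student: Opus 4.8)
The plan is to use the fact, just established in the preceding corollary, that $\bar{\beta}^T$ is an $\mathbb{F}^{\bar{\beta}^T}$-semi-martingale, so it admits a unique canonical decomposition $\bar{\beta}^T_t=M_t+A_t$ into a continuous local martingale $M$ and a continuous $\mathbb{F}^{\bar{\beta}^T}$-adapted process $A$ of bounded variation, and to identify $M$ and $A$ separately. For the martingale part I would first compute the quadratic variation, a pathwise quantity insensitive to the filtration: from $\bar{\beta}^T_t=W_t-\frac{t}{T}W_T+\frac{t}{T}B_t$ only the terms $\mathrm{d}W_t$ and $\frac{t}{T}\mathrm{d}B_t$ contribute, giving $\mathrm{d}\langle\bar{\beta}^T\rangle_t=\frac{T^2+t^2}{T^2}\mathrm{d}t$. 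Setting $\bar{B}_t:=\int_0^t(\frac{T^2+s^2}{T^2})^{-1/2}\mathrm{d}M_s$, L\'evy's characterisation shows $\bar{B}$ is an $\mathbb{F}^{\bar{\beta}^T}$-Brownian motion and $M_t=\int_0^t\sqrt{\frac{T^2+s^2}{T^2}}\,\mathrm{d}\bar{B}_s$, which is the first term of the claimed decomposition.

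For the drift I would pass to the larger filtration $\mathbb{G}:=\mathbb{F}^{\beta^T}\vee\mathbb{F}^{B}$. Using the bridge equation $\mathrm{d}\beta^T_t=-\frac{\beta^T_t}{T-t}\mathrm{d}t+\mathrm{d}\tilde{W}_t$, with $\tilde{W}$ still a $\mathbb{G}$-Brownian motion because $B\perp\beta^T$, together with $\mathrm{d}(\frac{t}{T}B_t)=\frac1T B_t\,\mathrm{d}t+\frac tT\mathrm{d}B_t$, the $\mathbb{G}$-drift of $\bar{\beta}^T$ is $-\frac{\beta^T_t}{T-t}+\frac{B_t}{T}$, which is exactly the infinitesimal version of the increment computed in the proof of Proposition \ref{propquasimartingale}. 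Since $\bar{\beta}^T$ is already an $\mathbb{F}^{\bar{\beta}^T}$-semi-martingale and $\mathbb{F}^{\bar{\beta}^T}\subset\mathbb{G}$, its $\mathbb{F}^{\bar{\beta}^T}$-drift rate is the predictable projection of this $\mathbb{G}$-drift (filtration shrinkage for special semi-martingales), namely $\alpha_t=\mathbb{E}[-\frac{\beta^T_t}{T-t}+\frac{B_t}{T}\mid\mathcal{F}^{\bar{\beta}^T}_t]$. Writing $\hat{\beta}_t:=\mathbb{E}[\beta^T_t\mid\mathcal{F}^{\bar{\beta}^T}_t]$ and $\hat{B}_t:=\mathbb{E}[B_t\mid\mathcal{F}^{\bar{\beta}^T}_t]$ and using that $\bar{\beta}^T_t=\beta^T_t+\frac tT B_t$ is $\mathcal{F}^{\bar{\beta}^T}_t$-measurable, so $\hat{\beta}_t=\bar{\beta}^T_t-\frac tT\hat{B}_t$, a short computation collapses this to $\alpha_t=\frac{\hat{B}_t-\bar{\beta}^T_t}{T-t}$. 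Integrating, $A_t=-\int_0^t\frac{\bar{\beta}^T_s}{T-s}\mathrm{d}s+\int_0^t\frac{\hat{B}_s}{T-s}\mathrm{d}s$, which reproduces the last two terms of the claim provided one shows $\hat{B}_s=\int_0^s a(s,u)\,\mathrm{d}\bar{\beta}^T_u$.

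The remaining task, and the heart of the matter, is to compute the filter $\hat{B}_t$ explicitly. Substituting $\beta^T_t=\bar{\beta}^T_t-\frac tT B_t$ into the $\mathbb{G}$-drift turns the observation into the scalar linear system $\mathrm{d}\bar{\beta}^T_t=\frac{B_t-\bar{\beta}^T_t}{T-t}\mathrm{d}t+\mathrm{d}V_t$ with $\mathrm{d}V_t=\mathrm{d}\tilde{W}_t+\frac tT\mathrm{d}B_t$, driven by the signal $B$; the state and observation noises are correlated, with $\mathrm{d}\langle V,B\rangle_t=\frac tT\mathrm{d}t$ and $\mathrm{d}\langle V\rangle_t=\frac{T^2+t^2}{T^2}\mathrm{d}t$. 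I would then invoke the Kalman–Bucy filtering theorem for correlated noise (Liptser–Shiryaev): the conditional mean $m_t=\hat{B}_t$ solves the linear filter $\mathrm{d}m_t=\big(\frac tT+\frac{\gamma_t}{T-t}\big)\frac{T^2}{T^2+t^2}\big(\mathrm{d}\bar{\beta}^T_t-\frac{m_t-\bar{\beta}^T_t}{T-t}\mathrm{d}t\big)$, while the deterministic conditional variance $\gamma_t=\mathbb{E}[(B_t-m_t)^2]$ obeys the Riccati equation $\dot{\gamma}_t=\frac{T^2}{T^2+t^2}\big(1-\frac{2t\gamma_t}{T(T-t)}-\frac{\gamma_t^2}{(T-t)^2}\big)$ with $\gamma_0=0$.

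The main obstacle is solving this Riccati equation in closed form and integrating the linear filter. I expect the solution $\gamma_t=\frac{(T-t)\,T\tan^{-1}(t/T)}{T-t+t\tan^{-1}(t/T)}$, the arctangent arising from integrating $\frac{T}{T^2+t^2}$ in the linearised equation, and this is where the transcendental terms of $a(s,u)$ originate; the gain $a(t,t)=\big(\frac tT+\frac{\gamma_t}{T-t}\big)\frac{T^2}{T^2+t^2}$ is the coefficient of $\mathrm{d}\bar{\beta}^T_t$ in the filter. Positing $m_t=\int_0^t a(t,u)\,\mathrm{d}\bar{\beta}^T_u$ and using $\bar{\beta}^T_t=\int_0^t\mathrm{d}\bar{\beta}^T_u$, Itô differentiation and comparison with the filter equation reduce the identification of the kernel to the transport equation $\partial_t a(t,u)=-\frac{a(t,t)}{T-t}\big(a(t,u)-1\big)$ for $u<t$ with diagonal datum $a(t,t)$; solving it by an integrating factor yields precisely the stated $a(s,u)$. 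Substituting $\hat{B}_t=m_t$ into $\alpha_t=\frac{\hat{B}_t-\bar{\beta}^T_t}{T-t}$ and integrating then completes the decomposition. The two delicate points are justifying the predictable-projection step under filtration shrinkage and carrying the Riccati integration through to the explicit kernel; verifying that the proposed $a(s,u)$ satisfies the transport equation with the correct diagonal value is then a routine, if lengthy, differentiation using $\frac{d}{du}\big(\frac{Tu}{T^2+u^2}+\tan^{-1}(u/T)\big)=\frac{2T^3}{(T^2+u^2)^2}$.
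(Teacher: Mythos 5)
Your proposal is correct in outline and shares the paper's skeleton: you identify the martingale part through the quadratic variation $\mathrm{d}\langle\bar{\beta}^T\rangle_t=\frac{T^2+t^2}{T^2}\mathrm{d}t$, and you obtain the $\mathbb{F}^{\bar{\beta}^T}$-drift rate $\frac{\hat{B}_t-\bar{\beta}^T_t}{T-t}$ by projecting the drift $\frac{B_t-\bar{\beta}^T_t}{T-t}$ computed in the larger filtration $\mathbb{F}^{\beta^T}\vee\mathbb{F}^{B}$, exactly as the paper does. (The paper justifies that projection step by the innovations theorem of filtering theory, citing Kallianpur and Bain--Crisan, after checking integrability of the drift; you should do the same rather than appeal loosely to ``filtration shrinkage''.) Where you genuinely diverge is in computing the filter $\hat{B}_t=\mathbb{E}[B_t\mid\mathcal{F}^{\bar{\beta}^T}_t]$: the paper writes $\hat{B}_t=\int_0^t a(t,u)\,\mathrm{d}\bar{\beta}^T_u$ and determines $a$ from the Gaussian orthogonality relations $\mathbb{E}\big[\bar{\beta}^T_s\big(B_t-\hat{B}_t\big)\big]=0$ for $s\le t$, differentiating the resulting integral equation twice in $s$ to reach an ODE in the second variable; you instead set up the correlated-noise Kalman--Bucy filter, solve the Riccati equation for the conditional variance $\gamma_t$, and recover the kernel from the transport equation $\partial_t a(t,u)=-\frac{a(t,t)}{T-t}\big(a(t,u)-1\big)$. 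Both routes lead to the same answer: your conjectured $\gamma_t=\frac{(T-t)\,T\tan^{-1}(t/T)}{T-t+t\tan^{-1}(t/T)}$ yields a gain $\big(\frac{t}{T}+\frac{\gamma_t}{T-t}\big)\frac{T^2}{T^2+t^2}$ that agrees with the paper's diagonal value $a(t,t)$, and the paper's kernel does satisfy your transport equation because $c(t)=-T^3\big(d(t)-1\big)$, so $c$ and $d-1$ obey the same linear ODE. Your approach is arguably more systematic (the arctangent emerges from the Riccati equation rather than from guessing the right ansatz for $a(t,s)$ after two differentiations), but it invokes the heavier correlated-noise filtering theorem and still owes the reader the actual verification of the Riccati solution, which your write-up only ``expects''; completing that integration is the one substantive item left open, and it does go through.
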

	\begin{proof}
		By \eqref{eqintegralfinite} and a well known result of filtering theory, see for instance Theorem 8.1.1 and Remark 8.1.1 in \cite{Kall} or  Proposition 2.30, p. 33 in \cite{BCfiltering}, it follows from \eqref{eqmamrtingale_barM} that the process
		\begin{equation}
			\bar{m}_t:=\bar{\beta}^T_t+\displaystyle\int_{0}^{t}\dfrac{\bar{\beta}^T_s-\mathbb{E}[B_s\vert \mathcal{F}_s^{\bar{\beta}^T}]}{T-s}\mathrm{d}s,\, t<T\label{eqbarm}
		\end{equation}
		is an $\mathbb{F}^{\bar{\beta}^T}$-martingale, which is Gaussian by construction. Due to \eqref{eqbarm} and Theorem 1 in \cite{MH} we may assume that 
		\begin{equation}
			\mathbb{E}[B_t\vert \mathcal{F}_t^{\bar{\beta}^T}]=\displaystyle\int_{0}^{t}\mathfrak{a}(t,u)\mathrm{d}\bar{\beta}^T_u.\label{eqconexpBargicenFbar}
		\end{equation}
		Therefore, we have only to derive the explicit form of the deterministic function $\mathfrak{a}$. Using the projection property of the conditional expectation, we have for all $0\leq s\leq t<T$,
		\begin{equation*}
			\mathbb{E}[\bar{\beta}^T_s(B_t-\mathbb{E}[B_t\vert \mathcal{F}_t^{\bar{\beta}^T}])]=0.
		\end{equation*}
		Hence, we have 
		\begin{equation*}
			\mathbb{E}[\bar{\beta}^T_sB_t]-\mathbb{E}[\bar{\beta}^T_s\bar{\beta}^T_t]\,\mathfrak{a}(t,t)+\displaystyle\int_{0}^{t}\mathbb{E}[\bar{\beta}^T_s\bar{\beta}^T_u]\dfrac{\partial \mathfrak{a}(t,u)}{\partial u}\mathrm{d}u=0.
		\end{equation*}
		Using \eqref{eqcovfunctionbar}, we get 
		\begin{equation*}
		\dfrac{s^2}{T}-\mathfrak{a}(t,t)\bigg[s-\dfrac{st}{T}+\dfrac{s^2t}{T^2}\bigg]+\displaystyle\int_{0}^{s}\bigg[u-\dfrac{us}{T}+\dfrac{u^2s}{T^2}\bigg]\dfrac{\partial \mathfrak{a}(t,u)}{\partial u}\mathrm{d}u+\displaystyle\int_{s}^{t}\bigg[s-\dfrac{su}{T}+\dfrac{s^2u}{T^2}\bigg]\dfrac{\partial \mathfrak{a}(t,u)}{\partial u}\mathrm{d}u=0.
		\end{equation*}
		%		\begin{equation*}
			%			\dfrac{s^2}{T}-a(t,t)\bigg[s-\dfrac{st}{T}+\dfrac{s^2t}{T^2}\bigg]+\displaystyle\int_{0}^{s}\bigg[u-\dfrac{us}{T}+\dfrac{u^2s}{T^2}\bigg]\dfrac{\partial a(t,u)}{\partial u}\mathrm{d}u+\displaystyle\int_{s}^{t}\bigg[s-\dfrac{su}{T}+\dfrac{s^2u}{T^2}\bigg]\dfrac{\partial a(t,u)}{\partial u}\mathrm{d}u=0.
			%		\end{equation*}
		By taking the first derivatives with respect to $s$ we obtain
		\begin{equation}
			2\dfrac{s}{T}-\mathfrak{a}(t,t)\bigg[1-\dfrac{t}{T}+2\dfrac{st}{T^2}\bigg]+\displaystyle\int_{0}^{s}\bigg[\dfrac{u^2}{T^2}-\dfrac{u}{T}\bigg]\dfrac{\partial \mathfrak{a}(t,u)}{\partial u}\mathrm{d}u+\displaystyle\int_{s}^{t}\bigg[1-\dfrac{u}{T}+2\dfrac{su}{T^2}\bigg]\dfrac{\partial \mathfrak{a}(t,u)}{\partial u}\mathrm{d}u=0.\label{eqfirstderivatives}
		\end{equation}
		Taking further derivatives with respect to $s$ we obtain
		%		\begin{equation}
			%			(T^2+s^2)\dfrac{\partial a(t,s)}{\partial s}+2s\,a(t,s)+2\displaystyle\int_{s}^{t}a(t,u)\mathrm{d}u-2T=0,\label{eqsecondderivatives}
			%		\end{equation}
		\begin{equation}
			2T -2  s \,\mathfrak{a}(t,s) - (s^2+T^2)\dfrac{\partial \mathfrak{a}(t,s)}{\partial s}
			-2\displaystyle\int_{s}^{t}\mathfrak{a}(t,u)\mathrm{d}u=0,\label{eqsecondderivatives}
		\end{equation}
		and
		%		\begin{equation}
			%			4s\,\dfrac{\partial a(t,s)}{\partial s}+(T^2+s^2)\,\dfrac{\partial^2 a(t,s)}{\partial^2 s}=0.\label{eqthirdderivatives}
			%		\end{equation}
		\begin{equation}
			4s\,\dfrac{\partial \mathfrak{a}(t,s)}{\partial s}+(T^2+s^2)\,\dfrac{\partial^2 \mathfrak{a}(t,s)}{\partial^2 s}=0.\label{eqthirdderivatives}
		\end{equation}
		This implies that $\mathfrak{a}(t,s)$ is of the form
		\begin{equation}
			\mathfrak{a}(t,s)=c(t)\bigg[\dfrac{s}{T^2(s^2+T^2)}+\frac{1}{T^3}\tan^{-1}\left(\frac{s}{T}\right)\bigg]+d(t).\label{eqa(t,s)}
		\end{equation}
		Substituting \eqref{eqa(t,s)} in  \eqref{eqsecondderivatives}, and taking $s=t$, we obtain 
		\begin{equation}
			2T -2 t \,a(t,t) -2 \dfrac{c(t)}{T^2+t^2}=0,\label{eqsecondderivatives1}
		\end{equation}
		hence,
		%\begin{equation}
		%	2\sigma -2 \,\dfrac{\sigma^6t^2c(t)}{1+\sigma^2t^2}-2 \sigma^5t\,c(t)\tan^{-1}(\sigma t)-2 \sigma^2 t \,d(t)- \dfrac{2\sigma^4c(t)}{1+\sigma^2t^2}=0,\label{eqsecondderivatives1}
		%\end{equation}
		\begin{equation}
			2T-2 \frac{t}{T^3}\tan^{-1}\left(\frac{t}{T}\right)\,c(t)-2 t \,d(t)- 2\frac{1}{T^2}c(t)=0.\label{eqsecondderivatives1}
		\end{equation}
		Substituting \eqref{eqa(t,s)} in \eqref{eqfirstderivatives}, and taking $s=0$, we obtain 
		\begin{equation}
			-\mathfrak{a}(t,t)\bigg[1-\dfrac{t}{T}\bigg]+\displaystyle\int_{0}^{t}\bigg[1-\dfrac{u}{T}\bigg]\dfrac{\partial \mathfrak{a}(t,u)}{\partial u}\mathrm{d}u=0.\label{eqfirstderivatives1}
		\end{equation}
		hence,
		\begin{multline}
			-\dfrac{T-t}{T}\left(\dfrac{tc(t)}{T^2(t^2+T^2)}-c(t)\frac{1}{T^3}\tan^{-1}(\frac{t}{T})-d(t)\right) \\+\frac{1}{T^3} c(t)\left(\dfrac{ t(T-t)}{t^2+T^2}+\tan^{-1}\left( \frac{t}{T}\right)\right)=0\label{eqfirstderivatives11}
		\end{multline}
		which implies that
		\begin{equation}
			d(t)= c(t) \dfrac{t}{T^3(T-t)}\tan^{-1}\left( \frac{t}{T}\right).\label{eqfirstderivatives111}
		\end{equation}
		Inserting \eqref{eqfirstderivatives111} into \eqref{eqsecondderivatives1}, we obtain
		\begin{equation*}
			c(t)=\dfrac{T^3(T-t)}{T-t+ t\,\tan^{-1}(\sigma t)}.
		\end{equation*}
		This latter and \eqref{eqfirstderivatives111} yield
		\begin{equation*}
			d(t)=\dfrac{t\,\tan^{-1}\left(\frac{t}{T}\right)}{T-t+t\,\tan^{-1}\left(\frac{t}{T}\right)}.
		\end{equation*}
		Finally, since $\bar{m}$ is a Gaussian martingale of finite quadratic variation such that $\langle \bar{m} \rangle_t=t+\dfrac{t^3}{3T^2}$, $\bar{m}$ can be represented as 
		\begin{equation*}
			\bar{m}_t=\displaystyle\int_{0}^{t}\sqrt{\frac{T^2+s^2}{T^2}}\mathrm{d}\bar{B}_s,	
		\end{equation*}
		where $\bar{B}$ is an $\mathbb{F}^{\bar{\beta}^T}$-Brownian motion. This completes the proof.
	\end{proof}
	\begin{remark}
		As a consequence of \eqref{eqcodexpbartgivenFBbeta} and \eqref{eqconexpBargicenFbar}, for $s\leq t<T$, the conditional expectation of $\bar{\beta}^T_{t}$ given $\mathcal{F}^{\bar{\beta}^T}_{s}$ by
		\begin{multline*}
			\mathbb{E}[\bar{\beta}^T_{t}\vert \mathcal{F}^{\bar{\beta}^T}_{s}]=\left[\dfrac{T-t}{T-s}+\dfrac{s(t-s)\,\tan^{-1}\left(\frac{s}{T}\right)}{\left(T-s+ s\,\tan^{-1}\left(\frac{s}{T}\right)\right)^2}\right]\bar{\beta}^T_s\\+\dfrac{t-s}{T-s+ s\,\tan^{-1}\left(\frac{s}{T}\right)}\displaystyle\int_0^s\bigg(\dfrac{T u}{u^2+T^2}+\tan^{-1}\left(\frac{u}{T}\right)\bigg)\mathrm{d}\bar{\beta}^T_u.
		\end{multline*}
		%	\begin{equation*}
			%		\mathbb{E}[\bar{\beta}^T_{t}\vert \mathcal{F}^{\bar{\beta}^T}_{s}]=\dfrac{T-t}{T-s}\bar{\beta}^T_s+\sigma T \dfrac{t-s}{T-s}\displaystyle\int_0^sa(s,u)\mathrm{d}\bar{\beta}^T_u,
			%	\end{equation*}
		%		where the function $a$ is given by \eqref{eqa(s,u)}. 
		This perfectly underlines the loss of the Markovian property.
	\end{remark}
\begin{proposition}
	For any $0<t<T$, we have
	\begin{equation}
		\mathbb{E}[B_{T}\vert \mathcal{F}^{\bar{\beta}^T}_{t}]=\displaystyle\int_0^t\frac{s (T - s) + \left( T^2+   s^2\right) \tan^{-1}\left(\frac{s}{T}\right)}{\sqrt{T^2 +  s^2} \left(T - s +  s \tan^{-1}\left(\frac{s}{T}\right)\right)}\mathrm{d}\bar{B}_s.
	\end{equation}
\end{proposition}
\begin{proof}
	We have for $0<t<T$,
	\begin{equation*}
		\mathbb{E}[B_{T}\vert \mathcal{F}^{\bar{\beta}^T}_{t}]=\mathbb{E}[\mathbb{E}[B_{T}\vert \mathcal{F}^{B}_{t}\vee\mathcal{F}^{\beta^T}_{t}]\vert \mathcal{F}^{\bar{\beta}^T}_{t}]=\mathbb{E}[\mathbb{E}[B_{T}\vert \mathcal{F}^{B}_{t}]\vert \mathcal{F}^{\bar{\beta}^T}_{t}]=\mathbb{E}[B_{t}\vert \mathcal{F}^{\bar{\beta}^T}_{t}].
	\end{equation*}
	It follows from \eqref{eqgaussianMarkov} that 
	\begin{equation*}
		\mathbb{E}[B_T\vert \mathcal{F}_t^{\bar{\beta}^T}]=\displaystyle\int_{0}^{t}\mathfrak{a}(t,u)\mathrm{d}\bar{\beta}^T_u
	\end{equation*}
	where,
\begin{equation*}
	\mathfrak{a}(s,u)=\dfrac{T-s}{T-s+ s\,\tan^{-1}(\frac{s}{T})}\bigg(\dfrac{T u}{u^2+T^2}+\tan^{-1}\left(\frac{u}{T}\right)\bigg)+\dfrac{s\,\tan^{-1}(\frac{s}{T})}{T-s+s\,\tan^{-1}(\frac{s}{T})}.
\end{equation*}
Hence,
	\begin{multline}
		\mathbb{E}[B_{T}\vert \mathcal{F}^{\bar{\beta}^T}_{t}]=\dfrac{T-t}{T-t+t\,\tan^{-1}(\frac{t}{T})}\displaystyle\int_{0}^{t}\left(\dfrac{Tu}{T^2+u^2}+\tan^{-1}\Big(\frac{u}{T}\Big)\right)\mathrm{d}\bar{\beta}^T_u+\\\dfrac{t\,\tan^{-1}(\frac{t}{T})}{T-t+t\,\tan^{-1}(\frac{t}{T})}\bar{\beta}^T_t.
	\end{multline}
	Setting $B_t^T=\mathbb{E}[B_{T}\vert \mathcal{F}^{\bar{\beta}^T}_{t}], 0<t<T$, by It\^o formula we obtain
	%		\begin{multline}
		%		\mathrm{d}B_t^T=\dfrac{T-t}{T-t+t\,\tan^{-1}(\frac{t}{T})}\left(\dfrac{Tt}{T^2+t^2}+\tan^{-1}\Big(\frac{t}{T}\Big)\right)\mathrm{d}\bar{\beta}^T_t+\dfrac{t\,\tan^{-1}(\frac{t}{T})}{T-t+t\,\tan^{-1}(\frac{t}{T})}\mathrm{d}\bar{\beta}^T_t\\
		%		+\left(\bar{\beta}^T_t-\displaystyle\int_{0}^{t}\left(\dfrac{Tu}{T^2+u^2}+\tan^{-1}\Big(\frac{u}{T}\Big)\right)\mathrm{d}\bar{\beta}^T_u\right)\frac{T \left((T^2 + t^2) \tan^{-1}\left(\frac{t}{T}\right) + t (T - t)\right)}{(T^2 + t^2) \left(t \tan^{-1}\left(\frac{t}{T}\right) + T - t\right)^2}\mathrm{d}t.
		%	\end{multline}
	%It follows from \eqref{eqcanonivaldecompositionbarbeta} that
	\begin{multline*}
		\mathrm{d}B_t^T=\frac{\sigma t (T - t) + T\left( 1+ \sigma^2  t^2\right) \tan^{-1}(\sigma t)}{\left(1 + \sigma^2 t^2\right) \left(T - t + \sigma T t \tan^{-1}(\sigma t)\right)}\mathrm{d}\bar{\beta}^T_t+\\
	\left(\bar{\beta}^T_t-\sigma T\displaystyle\int_{0}^{t}\left(\dfrac{\sigma u}{(\sigma^2u^2+1)}+\tan^{-1}(\sigma u)\right)\mathrm{d}\bar{\beta}^T_u\right)\frac{\sigma t (T - t) + T\left( 1+ \sigma^2  t^2\right) \tan^{-1}(\sigma t)}{\left(1 + \sigma^2 t^2\right) \left(T - t + \sigma T t \tan^{-1}(\sigma t)\right)^2}\mathrm{d}t.
	\end{multline*}
	%		$$ -\sigma T\frac{\sigma t (T - t) + T\left( 1+ \sigma^2  t^2\right) \tan^{-1}(\sigma t)}{\left(1 + \sigma^2 t^2\right) \left(T - t + \sigma T t \tan^{-1}(\sigma t)\right)^2}
	%		 $$
	From \eqref{eqcanonivaldecompositionbarbeta}, for $t<T$, we have
	\begin{multline*}
		\mathrm{d}\bar{\beta}^T_t=\sqrt{1+\sigma^2t^2}\,\mathrm{d}\bar{B}_t+\frac{\sigma T}{T - t + \sigma T t \tan^{-1}(\sigma t)}\displaystyle\int_{0}^{t}\left(\dfrac{\sigma u}{(\sigma^2u^2+1)}+\tan^{-1}(\sigma u)\right)\mathrm{d}\bar{\beta}^T_u\mathrm{d}t\\
		-\dfrac{\bar{\beta}^T_t}{T-t}\mathrm{d}t+\frac{\sigma  T t \tan^{-1}(\sigma t)}{\left(T-t\right) \left(T - t + \sigma T t \tan^{-1}(\sigma t)\right)}\bar{\beta}^T_t\mathrm{d}t\\
		=\sqrt{1+\sigma^2t^2}\,\mathrm{d}\bar{B}_t-\frac{1}{T - t + \sigma T t \tan^{-1}(\sigma t)}\left(\bar{\beta}^T_t-\sigma T\displaystyle\int_{0}^{t}\left(\dfrac{\sigma u}{(\sigma^2u^2+1)}+\tan^{-1}(\sigma u)\right)\mathrm{d}\bar{\beta}^T_u\right)\mathrm{d}t.
	\end{multline*}
	Thus,
	\begin{equation}
		\mathrm{d}B_t^T=\frac{\sigma t (T - t) + T\left( 1+ \sigma^2  t^2\right) \tan^{-1}(\sigma t)}{\sqrt{1 + \sigma^2 t^2} \left(T - t + \sigma T t \tan^{-1}(\sigma t)\right)}\mathrm{d}\bar{B}_t.
	\end{equation}
	This completes the proof.
\end{proof}
\section{The L\'evy Case: Beyond the Gaussian Framework}
The aim of this section is to extend the preceding analysis beyond the Gaussian setting by replacing the Brownian perturbation with a general L\'evy process. We consider the process
\begin{equation*}
	\bar{X}^T_t
	= W_t - \frac{t}{T} W_T + \frac{t}{T} X_t,
	\qquad 0 \le t \le T.
\end{equation*}
where $X$ is a L\'evy process independent of $W$. For background on Lévy processes, we refer the reader to the monographs \cite{Bertoin,Sato}. The analysis of the L\'evy-driven process depends on the structure of the
driving L\'evy process. We therefore separate the discussion into two steps. We first treat the finite variation case, where the jump structure can be described explicitly through the L\'evy--It\^o decomposition. We then add a Brownian component to the L\'evy driver, leading to a mixed Gaussian-jump framework and allowing a comparison with the purely Brownian case studied in the previous section. Since the Brownian case already shows that $\bar{\beta}^T$ is not Markovian in
its natural filtration, the general L\'evy-driven process cannot be Markovian in general. We therefore focus in this section on the semimartingale structure and the Doob--Meyer decomposition of $\bar{X}^T$.
%%%%%%%%%%%%%%%%%%%%%%%%%%%%%%%%%%%%%%%%%%%%%%%%%%%%%%%%%%
\subsection{The finite variation L\'evy case}
We first consider the case where $X$ is a finite variation L\'evy process with characteristics $(\lambda,0,\nu)$, independent of $W$. By the L\'evy--It\^o decomposition, and under the finite variation condition we have
\begin{equation}
    X_t
    =
    \left(
        \lambda-\displaystyle\int_{|x|<1} x\,\nu(\mathrm{d}x)
    \right)t
    +
    \sum_{s\leq t}\Delta X_s,
    \qquad 0\leq t\leq T.\label{eqdecpurejumplevy}
\end{equation}
In this subsection, we study the corresponding bridge-type process
\begin{equation}
      \eta^T_t
    =
    W_t-\frac{t}{T}W_T+\frac{t}{T}X_t,
    \qquad 0\leq t\leq T.  \label{eqeta^T}
\end{equation}
We have for all $t$,
\begin{equation}
	\mathbb{E}[X_t]=\left(\lambda-\displaystyle\int_{\vert x\vert <1}  x\, \nu(\mathrm{d}x) \right) t+ \mathbb{E}\left[\sum\limits_{s\leq t}\Delta X_s\right]=t\left(\lambda +\displaystyle\int_{\vert x\vert\geq 1}x\,\nu(\mathrm{d}x)\right).\label{eqExp}
\end{equation}
Since the Brownian bridge component is continuous, the jumps of $\eta^T$ are exactly those of the scaled L\'evy component. More precisely, for every $t>0$,
\[
    \Delta \eta^T_t=\frac{t}{T}\Delta X_t.
\]
Consequently, the jump part of $X$ can be recovered from $\eta^T$. Using
\eqref{eqdecpurejumplevy}, we obtain
\begin{equation}
	\mathbb{F}^{\eta^T}=\mathbb{F}^{X}\vee\mathbb{F}^{\beta^T}.
\end{equation}
Since the processes $(t\,X_t,\,t\geq 0)$ and $\beta^T$ are both semi-martingales with respect to $\mathbb{F}^{X}\vee\mathbb{F}^{\beta^T}$, then the process $\eta^T$ is a semi-martingale with respect to $\mathbb{F}^{X}\vee\mathbb{F}^{\beta^T}$ and hence with respect to its own filtration.
\begin{proposition}
	For every $0\leq s\leq t<T$, we have
	\begin{multline*}
		\mathbb{E}[\eta^T_t\vert \mathcal{F}^{\eta^T}_{s}]=\dfrac{T-t}{T-s}\eta^T_s+ T\dfrac{t-s}{T-s}\sum\limits_{0<u\leq s}\dfrac{\Delta \eta^T_{u}}{u}\\
		+ \dfrac{t-s}{T}\left[\left(t+\frac{Ts}{T-s}\right)\lambda+t\displaystyle\int_{\vert x\vert\geq 1}x\,\nu(\mathrm{d}x)-\frac{Ts}{T-s}\displaystyle\int_{\vert x\vert< 1}x\,\nu(\mathrm{d}x)\right].
	\end{multline*}
\end{proposition}
\begin{proof}
	From the independence hypothesis, we have
	\begin{align}
		\mathbb{E}[\eta^T_t\vert \mathcal{F}^{\eta^T}_{s}]=\mathbb{E}[\eta^T_t\vert \mathcal{F}^{X}_{s}\vee\mathcal{F}^{\beta^T}_{s}]=\mathbb{E}[\beta^T_t\vert \mathcal{F}^{\beta^T}_{s}]+\dfrac{t}{T}\mathbb{E}[X_t\vert \mathcal{F}^{X}_{s}].\label{eqsumXbeta}
	\end{align}
	Using the Markov property and the Gaussian property of $\beta^T$, we get
	\begin{equation}
		\mathbb{E}[\beta^T_t\vert \mathcal{F}^{\beta^T}_{s}]=\mathbb{E}[\beta^T_t\vert \beta^T_{s}]=\dfrac{T-t}{T-s}\beta^T_s.\label{eqexpbeta^T}
	\end{equation}
	Using the L\'evy property of $X$, \eqref{eqdecpurejumplevy}, and \eqref{eqExp} we obtain
	\begin{equation}
		\mathbb{E}[X_t\vert \mathcal{F}^{X}_{s}]=\left(\lambda +\displaystyle\int_{\vert x\vert\geq 1}x\,\nu(\mathrm{d}x)\right)(t-s)+X_s.\label{eqexpX}
	\end{equation}
	Inserting \eqref{eqexpbeta^T} and \eqref{eqexpX} into \eqref{eqsumXbeta} we obtain
	%\begin{equation}
	%	\mathbb{E}[X_t\vert \mathcal{F}^{X}_{s}]=t\left(\lambda +\displaystyle\int_{\vert x\vert\geq 1}x\,\nu(\mathrm{d}x)\right)-s\displaystyle\int_{\mathbb{R}\setminus\{0\}}  x\, \nu(\mathrm{d}x)+ \sum\limits_{u\leq s}\Delta X_u.
	%\end{equation}
	\begin{equation}
		\mathbb{E}[\eta^T_t\vert \mathcal{F}^{\eta^T}_{s}]=\dfrac{T-t}{T-s}\,\eta^T_s+\dfrac{t-s}{T-s}X_s+\dfrac{t(t-s)}{T}\left(\lambda +\displaystyle\int_{\vert x\vert\geq 1}x\,\nu(\mathrm{d}x)\right).\label{eqEXtildebeta}
	\end{equation}
	The desired result follows from \eqref{eqdecpurejumplevy}.
\end{proof}
\begin{proposition}
	The process $(\tilde{M}_t, 0\leq t \leq T)$ given by 
	%\begin{equation}
	%	\tilde{M}^T_t=\tilde{\beta}^T_t-\displaystyle\int_{0}^{t}\dfrac{X_s-\tilde{\beta}^T_s}{T-s}\mathrm{d}s-\dfrac{t^2}{2T}\left(\lambda +\displaystyle\int_{\vert x\vert\geq 1}x\,\nu(\mathrm{d}x)\right),\label{eqmamrtingale_tildeM}
	%\end{equation}
	\begin{multline}
		\tilde{M}^T_t=\eta^T_t-\displaystyle\int_{0}^{t}\dfrac{\left(\lambda-\displaystyle\int_{\vert x\vert <1}  x\, \nu(\mathrm{d}x)\right)s+ T\sum\limits_{0<u\leq s}\frac{\Delta \eta^T_{u}}{u}-\eta^T_s}{T-s}\mathrm{d}s\\-\dfrac{t^2}{2T}\left(\lambda +\displaystyle\int_{\vert x\vert\geq 1}x\,\nu(\mathrm{d}x)\right),\label{eqmamrtingale_tildeM}
	\end{multline}
	is a martingale with respect to $\mathbb{F}^{\eta^T}$.
\end{proposition}
\begin{proof}
	We start the proof with the observation that the process $(\tilde{M}_t, 0\leq t \leq T)$ can be rewritten as
	\begin{equation}
		\tilde{M}^T_t=\eta^T_t-\displaystyle\int_{0}^{t}\dfrac{ X_s-\eta^T_s}{T-s}\mathrm{d}s-\dfrac{t^2}{2T}\left(\lambda +\displaystyle\int_{\vert x\vert\geq 1}x\,\nu(\mathrm{d}x)\right),\label{eqmamrtingale_tildeM2}
	\end{equation}
	We have for all $0\leq t\leq T$,
	\begin{equation}
		\mathbb{E}\bigg[\displaystyle\int_0^t\bigg\vert \dfrac{\eta^T_s-X_s}{T-s} \bigg\vert\mathrm{d}s\bigg]\leq \displaystyle\int_0^t \bigg(\dfrac{\mathbb{E}[\vert\beta^T_s\vert]}{T-s}+\dfrac{\mathbb{E}[\vert X_s\vert]}{T}\bigg)\mathrm{d}s\label{eqintfinite}
	\end{equation}
	with
	\begin{equation}
		\displaystyle\int_0^t \dfrac{\mathbb{E}[\vert\beta^T_s\vert]}{T-s}\mathrm{d}s=\sqrt{\dfrac{2}{T\pi}}\displaystyle\int_0^t \sqrt{\dfrac{s}{T-s}} \mathrm{d}s<\infty\label{eqfirstestimate}
	\end{equation}
	and
	\begin{equation}
		\displaystyle\int_0^t \mathbb{E}[\vert X_s\vert]\mathrm{d}s\leq \left(\vert\lambda\vert+2\displaystyle\int_{\mathbb{\mathbb{R}}\setminus\{0\}}  \vert x\vert\, \nu(\mathrm{d}x) \right) t<\infty.\label{eqsecondestimate}
	\end{equation}
	Hence, the integral at the right-hand side of \eqref{eqmamrtingale_tildeM} is well defined for $0\leq t\leq T$. On the other hand, for all $s< t \leq T$, we have
	\begin{multline}
		\mathbb{E}[\tilde{M}^T_t-\tilde{M}^T_s\vert \mathcal{F}^{\eta^T}_{s}]=	\mathbb{E}[\eta^T_t-\eta^T_s\vert \mathcal{F}^{\eta^T}_{s}]-\displaystyle\int_{s}^{t}\mathbb{E}\bigg[\dfrac{X_u-\eta^T_u}{T-u}\bigg\vert \mathcal{F}^{\eta^T}_{s} \bigg]\mathrm{d}u\\-\dfrac{t^2-s^2}{2T}\left(\lambda +\displaystyle\int_{\vert x\vert\geq 1}x\,\nu(\mathrm{d}x)\right).\label{eqShowmartingalegivenFtildebeta}
	\end{multline}
	It follows from \eqref{eqEXtildebeta} that
	\begin{equation}
		\mathbb{E}[\eta^T_t-\eta^T_s\vert \mathcal{F}^{\eta^T}_{s}]=-\dfrac{t-s}{T-s}\eta^T_s+\dfrac{t-s}{T-s}X_s+ \dfrac{t(t-s)}{T}\left(\lambda +\displaystyle\int_{\vert x\vert\geq 1}x\,\nu(\mathrm{d}x)\right).\label{eqEgiventildeBeta_B1} 
	\end{equation}
	From \eqref{eqexpbeta^T} and \eqref{eqexpX} we have
	\begin{align*}
		\mathbb{E}\bigg[\dfrac{X_u-\eta^T_u}{T-u}\bigg\vert \mathcal{F}^{\tilde{\beta}^T}_{s} \bigg]&=\dfrac{\mathbb{E}\left[X_u\vert \mathcal{F}^{X}_{s} \right]}{T}-\mathbb{E}\left[\dfrac{\beta^T_u}{T-u}\bigg\vert \mathcal{F}^{\beta^T}_{s} \right]\\
		&=\dfrac{u-s}{T}\left(\lambda +\displaystyle\int_{\vert x\vert\geq 1}x\,\nu(\mathrm{d}x)\right)+\dfrac{X_s}{T-s}-\dfrac{\beta^T_s}{T-s}.
	\end{align*}
	Hence,
	\begin{align}
		\displaystyle\int_{s}^{t}\mathbb{E}\bigg[\dfrac{ X_u-\eta^T_u}{T-u}\bigg\vert \mathcal{F}^{\eta^T}_{s} \bigg]\mathrm{d}u=\dfrac{t-s}{T-s}(X_s-\eta^T_s)+ \dfrac{(t-s)^2}{2T}\left(\lambda +\displaystyle\int_{\vert x\vert\geq 1}x\,\nu(\mathrm{d}x)\right).\label{eqEgiventildeBeta_B2}
	\end{align}
	Inserting \eqref{eqEgiventildeBeta_B1} and \eqref{eqEgiventildeBeta_B2} into \eqref{eqShowmartingalegivenFtildebeta} we conclude that for all $s< t \leq T$,
	\begin{align*}
		\mathbb{E}[\tilde{M}^T_t\vert \mathcal{F}^{\tilde{\beta}^T}_{s}]=\tilde{M}^T_s.
	\end{align*}
	This completes the proof.
\end{proof}
%%%%%%%%%%%%%%%%%%%%%%%%%%%%%%%%%%%%%%%%%%%%%%%%%%%%%%%%%%%%%%
\subsection{The L\'evy case with Brownian and jump components}

We now consider a L\'evy process with both a Brownian component and a finite
variation jump component. More precisely, let
\[
    Y_t=B_t+X_t,
    \qquad 0\leq t\leq T,
\]
where $B$ is a Brownian motion and $X$ is the finite variation L\'evy process
considered above. We assume that $B$, $X$, and $W$ are independent. Then
\begin{equation}
    Y_t
    =
    B_t
    +
    \left(
        \lambda-\displaystyle\int_{|x|<1} x\,\nu(\mathrm{d}x)
    \right)t
    +
    \sum_{s\leq t}\Delta X_s,
    \qquad 0\leq t\leq T.
    \label{eqdecpurelevy}
\end{equation}
We associate with $Y$ the bridge-type process
\begin{equation}
    \hat{\eta}^T_t
    =W_t-\frac{t}{T}W_T+\frac{t}{T}Y_t,
    \qquad 0\leq t\leq T,
    \label{betahat}
\end{equation}
This setting allows us to combine the Gaussian structure studied in the first part with the jump behaviour arising from the L\'evy component.
%%%%%%%%%%%%%%%%%%%%%%%%%%%%%%%%%%%%%%%%%%%%%%%%%%%%%%%%%%%%%%
\begin{proposition}
	The process $(\hat{M}_t, 0\leq t \leq T)$ given by
	\begin{equation}
		\hat{M}^T_t=\hat{\eta}^T_t-\displaystyle\int_{0}^{t}\dfrac{ Y_s-\hat{\eta}^T_s}{T-s}\mathrm{d}s- \dfrac{t^2}{2T}\left(\lambda +\displaystyle\int_{\vert x\vert\geq 1}x\,\nu(\mathrm{d}x)\right),\label{eqmamrtingale_hatM}
	\end{equation}
	is a martingale with respect to $\mathbb{F}^{\beta^T}\vee\mathbb{F}^{Y}$.
\end{proposition}
\begin{proof}
	We have for all $0\leq t\leq T$,
	\begin{equation}
		\mathbb{E}\bigg[\displaystyle\int_0^t\bigg\vert \dfrac{\hat{\eta}^T_s-Y_s}{T-s} \bigg\vert\mathrm{d}s\bigg]\leq \displaystyle\int_0^t \bigg(\dfrac{\mathbb{E}[\vert\beta^T_s\vert]}{T-s}+\dfrac{\mathbb{E}[\vert X_s\vert]}{T}+\dfrac{\vert\mathbb{E}[\vert B_s\vert]}{T}\bigg)\mathrm{d}s
	\end{equation}
	From \eqref{eqfirstestimate}, \eqref{eqsecondestimate}, and the fact that
	\begin{equation*}
		\displaystyle\int_0^t \mathbb{E}[\vert B_s\vert]\mathrm{d}s\leq\sqrt{\frac{2t}{\pi}}<\infty
	\end{equation*}
	we see that
	\begin{equation}
		\mathbb{E}\bigg[\displaystyle\int_0^t\bigg\vert \dfrac{\hat{\eta}^T_s-Y_s}{T-s} \bigg\vert\mathrm{d}s\bigg]<+\infty.\label{eqhatfinite}
	\end{equation}
	Hence, the integral at the right-hand side of \eqref{eqmamrtingale_hatM} is well defined for $0\leq t\leq T$. On the other hand, for all $s< t \leq T$, we have
	\begin{multline}
		\mathbb{E}[\hat{M}^T_t-\hat{M}^T_s \vert \mathcal{F}^{\beta^T}_{s}\vee \mathcal{F}^{Y}_{s}]=\mathbb{E}[\hat{\eta}^T_t-\hat{\eta}^T_s\vert \mathcal{F}^{\beta^T}_{s}\vee \mathcal{F}^{Y}_{s}]-\displaystyle\int_{s}^{t}\mathbb{E}\bigg[\dfrac{Y_u-\hat{\eta}^T_u}{T-u}\bigg\vert \mathcal{F}^{\beta^T}_{s}\vee \mathcal{F}^{Y}_{s} \bigg]\mathrm{d}u\\-\dfrac{t^2-s^2}{2T}\left(\lambda +\displaystyle\int_{\vert x\vert\geq 1}x\,\nu(\mathrm{d}x)\right).\label{eqShowmartingalegivenFhatbeta}
	\end{multline}
	Simple computation yields
	\begin{equation}
		\mathbb{E}[\hat{\eta}^T_t-\hat{\eta}^T_s\vert \mathcal{F}^{\beta^T}_{s}\vee \mathcal{F}^{Y}_{s}]=\dfrac{t-s}{T-s}(Y_s-\hat{\eta}^T_s)+\dfrac{t(t-s)}{T}\left(\lambda +\displaystyle\int_{\vert x\vert\geq 1}x\,\nu(\mathrm{d}x)\right)\label{eqEgivenhatBeta_B1} 
	\end{equation}
	and
	\begin{align}
		\displaystyle\int_{s}^{t}\mathbb{E}\bigg[\dfrac{Y_u-\hat{\eta}^T_u}{T-u}\bigg\vert \mathcal{F}^{\beta^T}_{s}\vee \mathcal{F}^{Y}_{s} \bigg]\mathrm{d}u=\dfrac{t-s}{T-s}(Y_s-\tilde{\eta}^T_s)+ \dfrac{(t-s)^2}{2T}\left(\lambda +\displaystyle\int_{\vert x\vert\geq 1}x\,\nu(\mathrm{d}x)\right).\label{eqEgivenhatBeta_B2}
	\end{align}
	Inserting \eqref{eqEgivenhatBeta_B1} and \eqref{eqEgivenhatBeta_B2} into \eqref{eqShowmartingalegivenFhatbeta} we conclude that for all $s< t \leq T$,
	\begin{align*}
		\mathbb{E}[\hat{M}^T_t\vert \mathcal{F}^{\beta^T}_{s}\vee \mathcal{F}^{Y}_{s}]=\hat{M}^T_s.
	\end{align*}
	This completes the proof.
\end{proof}
%\begin{proposition}
%	The process $(\hat{m}_t, 0\leq t \leq T)$ given by 
%	\begin{multline}
%		\hat{m}^T_t=\hat{\eta}^T_t-\displaystyle\int_{0}^{t}\dfrac{\left(\lambda-\displaystyle\int_{\vert x\vert <1}  x\, \nu(\mathrm{d}x)\right)s+T\sum\limits_{0<u\leq s}\frac{\Delta \hat{\eta}^T_{u}}{u}+\displaystyle\int_{0}^{s}a(s,u)\mathrm{d}\bar{\beta}^T_u-\hat{\eta}^T_s}{T-s}\mathrm{d}s\\-\dfrac{t^2}{2T}\left(\lambda +\displaystyle\int_{\vert x\vert\geq 1}x\,\nu(\mathrm{d}x)\right),\label{eqmamrtingale_tildeM}
%	\end{multline}
%	is a martingale with respect to $\mathbb{F}^{\hat{\eta}^T}$.
%\end{proposition}
\begin{proposition}
	The process $(\hat{m}_t, 0\leq t \leq T)$ given by 
	\begin{small}
	\begin{multline*}
		\hat{m}^T_t=\hat{\eta}^T_t-\displaystyle\int_{0}^{t}\dfrac{\left(\lambda-\displaystyle\int_{\vert x\vert <1}  x\, \nu(\mathrm{d}x)\right)s+T\sum\limits_{0<u\leq s}\frac{\Delta \hat{\eta}^T_{u}}{u}+\mathfrak{R}^{\hat{\eta}^T}(s,T)-\hat{\eta}^T_s}{T-s}\mathrm{d}s\\-\dfrac{t^2}{2T}\left(\lambda +\displaystyle\int_{\vert x\vert\geq 1}x\,\nu(\mathrm{d}x)\right),\label{eqmamrtingale_tildeM}
	\end{multline*}
	\end{small}
is a martingale with respect to $\mathbb{F}^{\hat{\eta}^T}$, where
	\begin{multline}
	\mathfrak{R}^{\hat{\eta}^T}(s,T)=\dfrac{T-s}{T-s+s\,\tan^{-1}\left( \frac{s}{T}\right)}\bigg[\displaystyle\int_{0}^{s}\bigg(\dfrac{Tu}{(u^2+T^2)}+\tan^{-1}\left( \frac{u}{T}\right)\bigg)\mathrm{d}\hat{\eta}^T_u\\
	-\displaystyle\int_{0}^{s}\displaystyle\int_{\mathbb{R}}\bigg(\dfrac{ Tu^2}{(u^2+T^2)}+u\,\tan^{-1}\left( \frac{u}{T}\right)\bigg)x\,\mathcal{N}(\mathrm{d}u,\mathrm{d}x)\\
    -\displaystyle\int_{0}^{s}\bigg(\dfrac{Tu}{(u^2+T^2)}+\tan^{-1}\left( \frac{u}{T}\right)\bigg)\left(\sum\limits_{0<v\leq u}\frac{\Delta \hat{\eta}^T_{v}}{v}\right)\mathrm{d}u\bigg]\\
	+\dfrac{s\,\tan^{-1}\left( \frac{s}{T}\right)}{T-s+s\,\tan^{-1}\left( \frac{s}{T}\right)}\bigg[\hat{\eta}^T_s
	-\displaystyle\int_{0}^{s}\displaystyle\int_{\mathbb{R}}\dfrac{ux}{T}\,\mathcal{N}(\mathrm{d}u,\mathrm{d}x)-\displaystyle\int_{0}^{s}\left(\sum\limits_{0<v\leq u}\frac{\Delta \hat{\eta}^T_{v}}{v}\right)\mathrm{d}u\bigg]\\
	- \left(\lambda-\displaystyle\int_{\vert x\vert <1}  x\, \nu(\mathrm{d}x)\right)\dfrac{Ts(T-s)+\left(T s^2+T^2(s-T)\right)\tan^{-1}\left( \frac{s}{T}\right)}{T\left[T-s+ s\,\tan^{-1}\left( \frac{s}{T}\right)\right]}.
\end{multline}
Here, $\mathcal{N}$ is the Poisson random measure associated with $X$.
\end{proposition}
\begin{proof}
	Firstly, the process $\hat{\eta}^T$ can be rewritten as
	\begin{equation}
		\hat{\eta}^T_t=\bar{\beta}^T_t+\dfrac{t}{T} X_{t}
	\end{equation}
	As proved in the precious subsection we can show that 
	\begin{equation}
		\mathbb{F}^{\hat{\eta}^T}=\mathbb{F}^{\bar{\beta}^T}\vee \mathbb{F}^{X}.\label{eqFhat}
	\end{equation}
	By \eqref{eqhatfinite} and a well known result of filtering theory, see for instance Theorem 8.1.1 and Remark 8.1.1 in \cite{Kall} or  Proposition 2.30, p. 33 in \cite{BCfiltering}, it follows from \eqref{eqmamrtingale_barM} that the process
	\begin{equation}
		\hat{m}^T_t=\hat{\eta}^T_t-\displaystyle\int_{0}^{t}\dfrac{\mathbb{E}[Y_s\vert \mathcal{F}_s^{\hat{\eta}^T}]-\hat{\eta}^T_s}{T-s}\mathrm{d}s-\dfrac{t^2}{2T}\left(\lambda +\displaystyle\int_{\vert x\vert\geq 1}x\,\nu(\mathrm{d}x)\right)
	\end{equation}
	is an $\mathbb{F}^{\hat{\eta}^T}$-martingale. It remains to compute the conditional expectation $\mathbb{E}[Y_s\vert \mathcal{F}_s^{\hat{\eta}^T}]$. Using \eqref{eqFhat} and \eqref{eqconexpBargicenFbar}, we obtain
	\begin{align*}
		\mathbb{E}[Y_s\vert \mathcal{F}_s^{\hat{\eta}^T}]&=\mathbb{E}[B_s+X_s\vert \mathcal{F}_s^{\bar{\beta}^T}\vee \mathcal{F}_s^{X}]=\mathbb{E}[B_s\vert \mathcal{F}_s^{\bar{\beta}^T}]+X_s\\&=\displaystyle\int_{0}^{s}a(s,u)\mathrm{d}\bar{\beta}^T_u+(\lambda-\displaystyle\int_{\vert x\vert <1}  x\, \nu(\mathrm{d}x))s+T\,\sum\limits_{0<u\leq s}\frac{\Delta \hat{\eta}^T_{u}}{u}.
	\end{align*} 
It remains to derive the expression of 
\begin{equation*}
	\mathfrak{R}^{\hat{\eta}^T}(s,T):=\displaystyle\int_{0}^{s}a(s,u)\mathrm{d}\bar{\beta}^T_u, s<T.
\end{equation*}
	We have 
	\begin{equation*}
		\hat{\eta}^T_u=\bar{\beta}^T_u+\dfrac{u^2}{T} \left(\lambda-\displaystyle\int_{\vert x\vert <1}  x\, \nu(\mathrm{d}x)\right)+\dfrac{u}{T}\sum\limits_{r\leq u}\Delta X_r,\,0\leq u\leq T,
	\end{equation*}
	hence,
	\begin{equation*}
		\mathrm{d}\hat{\eta}^T_u=\mathrm{d}\bar{\beta}^T_u+2  \dfrac{u}{T} \left(\lambda-\displaystyle\int_{\vert x\vert <1}  x\, \nu(\mathrm{d}x)\right)\mathrm{d}u+\left(\sum\limits_{0<r\leq u}\frac{\Delta \hat{\eta}^T_{r}}{r}\right)\mathrm{d}u+\dfrac{u}{T}\displaystyle\int_{\mathbb{R}}x\,\mathcal{N}(\mathrm{d}u,\mathrm{d}x),
	\end{equation*}
	The quantity $\displaystyle\int_{0}^{s}a(s,u)\mathrm{d}\bar{\beta}^T_u$ can be written in terms of the process $\hat{\eta}^T$ as follows:
	\begin{multline}
		\displaystyle\int_{0}^{s}a(s,u)\mathrm{d}\bar{\beta}^T_u=\displaystyle\int_{0}^{s}a(s,u)\mathrm{d}\hat{\eta}^T_u-\dfrac{2}{T} \left(\lambda-\displaystyle\int_{\vert x\vert <1}  x\, \nu(\mathrm{d}x)\right)\displaystyle\int_{0}^{s}u\,a(s,u)\mathrm{d}u\\-\dfrac{1}{T}\displaystyle\int_{0}^{s}\displaystyle\int_{\mathbb{R}}u\,a(s,u)x\,\mathcal{N}(\mathrm{d}u,\mathrm{d}x)-\displaystyle\int_{0}^{s}a(s,u)\left(\sum\limits_{0<r\leq u}\frac{\Delta \hat{\eta}^T_{r}}{r}\right)\mathrm{d}u.
	\end{multline}
	It follows from \eqref{eqa(s,u)} that
	%				$$\dfrac{T-s}{T-s+\sigma s\,T\,\tan^{-1}(\sigma s)}\bigg(\dfrac{\sigma u}{(\sigma^2u^2+1)}+\tan^{-1}(\sigma u)\bigg)+\dfrac{s\,\tan^{-1}(\sigma s)}{T-s+\sigma s\,T\,\tan^{-1}(\sigma s)}$$
	\begin{multline}
		\displaystyle\int_{0}^{s}u\,a(s,u)\mathrm{d}u=\dfrac{T-s}{T-s+ s\,\tan^{-1}\left( \frac{s}{T}\right)}\displaystyle\int_{0}^{s}\bigg(\dfrac{ Tu^2}{(u^2+T^2)}+u\,\tan^{-1}\left( \frac{u}{T}\right)\bigg)\mathrm{d}u\\+\dfrac{\frac{s^3}{2}\,\tan^{-1}\left( \frac{s}{T}\right)}{T-s+ s\,\tan^{-1}\left( \frac{s}{T}\right)}
		=\dfrac{Ts(T-s)+\left(Ts^2+T^2(s-T)\right)\tan^{-1}\left( \frac{s}{T}\right)}{2\left(T-s+ s\,\tan^{-1}\left( \frac{s}{T}\right)\right)},
	\end{multline}
	\begin{multline*}
		\displaystyle\int_{0}^{s}\displaystyle\int_{\mathbb{R}}u\,a(s,u)\,x\,\mathcal{N}(\mathrm{d}u,\mathrm{d}x)=\dfrac{s\,\tan^{-1}\left( \frac{s}{T}\right)}{T-s+s\,\tan^{-1}\left( \frac{s}{T}\right)} \displaystyle\int_{0}^{s}\displaystyle\int_{\mathbb{R}}ux\,\mathcal{N}(\mathrm{d}u,\mathrm{d})x\\
		+\dfrac{T-s}{T-s+s\,\tan^{-1}\left( \frac{s}{T}\right)}\displaystyle\int_{0}^{s}\displaystyle\int_{\mathbb{R}}\bigg(\dfrac{T u^2}{(u^2+T^2)}+u\tan^{-1}\left( \frac{u}{T}\right)\bigg)x\,\mathcal{N}(\mathrm{d}u,\mathrm{d}x),
	\end{multline*}
and
	\begin{multline*}
		\displaystyle\int_{0}^{s}a(s,u)\left(\sum\limits_{0<v\leq u}\frac{\Delta \hat{\eta}^T_{v}}{v}\right)\mathrm{d}u=\dfrac{s\,\tan^{-1}\left( \frac{s}{T}\right)}{T-s+s\,\tan^{-1}\left( \frac{s}{T}\right)}\displaystyle\int_{0}^{s}\left(\sum\limits_{0<v\leq u}\frac{\Delta \hat{\eta}^T_{v}}{v}\right)\mathrm{d}u\\
		+\dfrac{T-s}{T-s+s\,\tan^{-1}\left( \frac{s}{T}\right)}\displaystyle\int_{0}^{s}\bigg(\dfrac{Tu}{(u^2+T^2)}+\tan^{-1}\left( \frac{u}{T}\right)\bigg)\left(\sum\limits_{0<v\leq u}\frac{\Delta \hat{\eta}^T_{v}}{v}\right)\mathrm{d}u.
	\end{multline*}
Combining this we obtain the desired result.
\end{proof}
\begin{proposition}
	For any $0<t<T$, we have
%	\begin{align*}
	%	Y_t^T&=\mathbb{E}[Y_{T}\vert \mathcal{F}^{\hat{\beta}^T}_{t}]=\dfrac{T-t}{T-t+\sigma t\,T\,\tan^{-1}(\sigma t)}\left(\displaystyle\int_{0}^{t}\bigg(\dfrac{\sigma u}{(\sigma^2u^2+1)}+\tan^{-1}(\sigma u)\bigg)\mathrm{d}\bar{\beta}^T_u-\dfrac{\bar{\beta}^T_t}{\sigma T}\right)+\dfrac{\bar{\beta}^T_t}{\sigma T}\\
%		&+\left(\lambda +\displaystyle\int_{\vert x\vert\geq 1}x\,\nu(\mathrm{d}x)\right)(T-t)+X_t.
%	\end{align*}
\begin{multline}
	\mathbb{E}[Y_T\vert \mathcal{F}^{\hat{\eta}^T}_{t}]=\displaystyle\int_0^t\frac{T \left((T^2 + s^2) \tan^{-1}\left(\frac{s}{T}\right) + s (T - s)\right)}{(T^2 + s^2) \left(s \tan^{-1}\left(\frac{s}{T}\right) + T - s\right)}\mathrm{d}\hat{m}^T_s\\+\displaystyle\int_0^t\frac{T^2(T - s)}{(T^2 + s^2) \left(s \tan^{-1}\left(\frac{s}{T}\right) + T - s\right)}\displaystyle\int_{\mathbb{R}}x\,\left(\mathcal{N}(\mathrm{d}s,\mathrm{d}x)-\,\nu(\mathrm{d}x)\mathrm{d}s\right).
\end{multline}
%\begin{multline}
%	\mathrm{d}Y_t^T=\frac{T \left((T^2 + t^2) \tan^{-1}\left(\frac{t}{T}\right) + t (T - t)\right)}{(T^2 + t^2) \left(t \tan^{-1}\left(\frac{t}{T}\right) + T - t\right)}\mathrm{d}\hat{m}^T_t\\+\frac{T^2(T - t)}{(T^2 + t^2) \left(t \tan^{-1}\left(\frac{t}{T}\right) + T - t\right)}\displaystyle\int_{\mathbb{R}}x\,\left(\mathcal{N}(\mathrm{d}t,\mathrm{d}x)-\,\nu(\mathrm{d}x)\mathrm{d}t\right).
%\end{multline}
\end{proposition}
\begin{proof}
	For $0<t<T$, we have
	\begin{equation*}
		\mathbb{E}[Y_{T}\vert \mathcal{F}^{\hat{\eta}^T}_{t}]=\mathbb{E}[B_{T}+X_T\vert \mathcal{F}^{\bar{\beta}^T}_{t}\vee \mathcal{F}^{X}_{t}]=\mathbb{E}[B_{T}\vert \mathcal{F}^{\bar{\beta}^T}_{t}]+\mathbb{E}[X_T\vert  \mathcal{F}^{X}_{t}]
	\end{equation*}
	For the first expectation, we have
	%			\begin{multline*}
		%				\mathbb{E}[B_{T}\vert \mathcal{F}^{\bar{\beta}^T}_{t}]=\displaystyle\int_{0}^{t}a(t,u)\mathrm{d}\bar{\beta}^T_u=\dfrac{T-t}{T-t+t\,\tan^{-1}(\frac{t}{T})}\left(\displaystyle\int_{0}^{t}\left(\dfrac{Tu}{T^2+u^2}+\tan^{-1}\Big(\frac{u}{T}\Big)\right)\mathrm{d}\bar{\beta}^T_u-\bar{\beta}^T_t\right)+\bar{\beta}^T_t.
		%			\end{multline*}
	\begin{multline*}
		\mathbb{E}[ B_{T}\vert \mathcal{F}^{\bar{\beta}^T}_{t}]=\displaystyle\int_{0}^{t}\mathfrak{a}(t,u)\mathrm{d}\bar{\beta}^T_u=\dfrac{T-t}{T-t+t\,\tan^{-1}\left( \frac{t}{T}\right)}\displaystyle\int_{0}^{t}\bigg(\dfrac{Tu}{(u^2+T^2)}+\tan^{-1}\left( \frac{u}{T}\right)\bigg)\mathrm{d}\bar{\beta}^T_u\\+\dfrac{ t\,\tan^{-1}\left( \frac{t}{T}\right)}{T-t+t\,\tan^{-1}\left( \frac{t}{T}\right)}\bar{\beta}^T_t\\
		=\dfrac{T-t}{T-t+t\,\tan^{-1}\left( \frac{t}{T}\right)}\left( \displaystyle\int_{0}^{t}\bigg(\dfrac{Tu}{(u^2+T^2)}+\tan^{-1}\left( \frac{u}{T}\right)\bigg)\mathrm{d}\bar{\beta}^T_u-\bar{\beta}^T_t\right)+\bar{\beta}^T_t.
	\end{multline*}
	%		$$ \dfrac{T-s}{T-s+\sigma s\,T\,\tan^{-1}(\sigma s)}\bigg(\dfrac{\sigma u}{(\sigma^2u^2+1)}+\tan^{-1}(\sigma u)\bigg)+\dfrac{s\,\tan^{-1}(\sigma s)}{T-s+\sigma s\,T\,\tan^{-1}(\sigma s)} $$
	For the second expectation, we have
	\begin{equation}
		\mathbb{E}[X_T\vert \mathcal{F}^{X}_{t}]=\left(\lambda +\displaystyle\int_{\vert x\vert\geq 1}x\,\nu(\mathrm{d}x)\right)(T-t)+X_t.
	\end{equation}
Setting, 
	\begin{equation}
		Y_t^T=\mathbb{E}[Y_{T}\vert \mathcal{F}^{\hat{\eta}^T}_{t}],\quad t\in [0,T],
	\end{equation}
	by It\^o formula we obtain
	\begin{multline}
		\mathrm{d}Y_t^T=\frac{T \left((T^2 + t^2) \tan^{-1}\left(\frac{t}{T}\right) + t (T - t)\right)}{(T^2 + t^2) \left(t \tan^{-1}\left(\frac{t}{T}\right) + T - t\right)}\mathrm{d}\hat{\beta}^T_t-\frac{\left((T^2 + t^2) \tan^{-1}\left(\frac{t}{T}\right) + t (T - t)\right)}{(T^2 + t^2) \left(t \tan^{-1}\left(\frac{t}{T}\right) + T - t\right)}X_t\,\mathrm{d}t\\
		-\frac{\left((T^2 + t^2) \tan^{-1}\left(\frac{t}{T}\right) + t (T - t)\right)}{(T^2 + t^2) \left(t \tan^{-1}\left(\frac{t}{T}\right) + T - t\right)}t\,\mathrm{d}X_t
		\\
		+\left(\bar{\beta}^T_t-\displaystyle\int_{0}^{t}\left(\dfrac{Tu}{T^2+u^2}+\tan^{-1}\Big(\frac{u}{T}\Big)\right)\mathrm{d}\bar{\beta}^T_u\right)\frac{T \left((T^2 + t^2) \tan^{-1}\left(\frac{t}{T}\right) + t (T - t)\right)}{(T^2 + t^2) \left(t \tan^{-1}\left(\frac{t}{T}\right) + T - t\right)^2}\mathrm{d}t\\
		-\left(\lambda +\displaystyle\int_{\vert x\vert\geq 1}x\,\nu(\mathrm{d}x)\right)\mathrm{d}t+\mathrm{d}X_t.
	\end{multline}
	Using the fact that
	\begin{equation}
		\mathrm{d}\hat{\eta}^T_t=\mathrm{d}\hat{m}^T_t+\dfrac{X_t+\mathfrak{R}^{\hat{\eta}^T}(t,T)-\hat{\eta}^T_t}{T-t}\mathrm{d}t+\dfrac{t}{T}\left(\lambda +\displaystyle\int_{\vert x\vert\geq 1}x\,\nu(\mathrm{d}x)\right)\mathrm{d}t
	\end{equation}
	we obtain
	\begin{multline}
		\mathrm{d}Y_t^T=\frac{T \left((T^2 + t^2) \tan^{-1}\left(\frac{t}{T}\right) + t (T - t)\right)}{(T^2 + t^2) \left(t \tan^{-1}\left(\frac{t}{T}\right) + T - t\right)}\mathrm{d}\hat{m}^T_t+\frac{T \left((T^2 + t^2) \tan^{-1}\left(\frac{t}{T}\right) + t (T - t)\right)}{(T^2 + t^2) \left(t \tan^{-1}\left(\frac{t}{T}\right) + T - t\right)}\dfrac{X_t}{T-t}\mathrm{d}t\\
		+\frac{T \left((T^2 + t^2) \tan^{-1}\left(\frac{t}{T}\right) + t (T - t)\right)}{(T^2 + t^2) \left(t \tan^{-1}\left(\frac{t}{T}\right) + T - t\right)^2}\left(\displaystyle\int_{0}^{t}\left(\dfrac{Tu}{T^2+u^2}+\tan^{-1}\Big(\frac{u}{T}\Big)\right)\mathrm{d}\bar{\beta}^T_u-\bar{\beta}^T_t\right)\mathrm{d}t\\
		+\frac{T \left((T^2 + t^2) \tan^{-1}\left(\frac{t}{T}\right) + t (T - t)\right)}{(T^2 + t^2) \left(t \tan^{-1}\left(\frac{t}{T}\right) + T - t\right)}\dfrac{\bar{\beta}^T_t}{T-t}\mathrm{d}t-\frac{T \left((T^2 + t^2) \tan^{-1}\left(\frac{t}{T}\right) + t (T - t)\right)}{(T^2 + t^2) \left(t \tan^{-1}\left(\frac{t}{T}\right) + T - t\right)}\dfrac{\hat{\beta}^T_t}{T-t}\mathrm{d}t\\
		+\frac{\left((T^2 + t^2) \tan^{-1}\left(\frac{t}{T}\right) + t (T - t)\right)}{(T^2 + t^2) \left(t \tan^{-1}\left(\frac{t}{T}\right) + T - t\right)}t\left(\lambda +\displaystyle\int_{\vert x\vert\geq 1}x\,\nu(\mathrm{d}x)\right)\mathrm{d}t\\-\frac{\left((T^2 + t^2) \tan^{-1}\left(\frac{t}{T}\right) + t (T - t)\right)}{(T^2 + t^2) \left(t \tan^{-1}\left(\frac{t}{T}\right) + T - t\right)}X_t\,\mathrm{d}t
		-\frac{\left((T^2 + t^2) \tan^{-1}\left(\frac{t}{T}\right) + t (T - t)\right)}{(T^2 + t^2) \left(t \tan^{-1}\left(\frac{t}{T}\right) + T - t\right)}t\,\mathrm{d}X_t
		\\
		+\left(\bar{\beta}^T_t-\displaystyle\int_{0}^{t}\left(\dfrac{Tu}{T^2+u^2}+\tan^{-1}\Big(\frac{u}{T}\Big)\right)\mathrm{d}\bar{\beta}^T_u\right)\frac{T \left((T^2 + t^2) \tan^{-1}\left(\frac{t}{T}\right) + t (T - t)\right)}{(T^2 + t^2) \left(t \tan^{-1}\left(\frac{t}{T}\right) + T - t\right)^2}\mathrm{d}t\\
		-\left(\lambda +\displaystyle\int_{\vert x\vert\geq 1}x\,\nu(\mathrm{d}x)\right)\mathrm{d}t+\mathrm{d}X_t.
	\end{multline}
	Thus, 
	\begin{multline}
		\mathrm{d}Y_t^T=\frac{T \left((T^2 + t^2) \tan^{-1}\left(\frac{t}{T}\right) + t (T - t)\right)}{(T^2 + t^2) \left(t \tan^{-1}\left(\frac{t}{T}\right) + T - t\right)}\mathrm{d}\hat{m}^T_t\\+\frac{T^2(T - t)}{(T^2 + t^2) \left(t \tan^{-1}\left(\frac{t}{T}\right) + T - t\right)}\displaystyle\int_{\mathbb{R}}x\,\left(\mathcal{N}(\mathrm{d}t,\mathrm{d}x)-\,\nu(\mathrm{d}x)\mathrm{d}t\right).
	\end{multline}
	This completes the proof.
\end{proof}

%%===========================================================================================%%
%% If you are submitting to one of the Nature Portfolio journals, using the eJP submission   %%
%% system, please include the references within the manuscript file itself. You may do this  %%
%% by copying the reference list from your .bbl file, paste it into the main manuscript .tex %%
%% file, and delete the associated \verb+\bibliography+ commands.                            %%
%%===========================================================================================%%

%\bibliography{sn-bibliography}% common bib file
%% if required, the content of .bbl file can be included here once bbl is generated
%%\input sn-article.bbl
	
\end{document}